\newtheorem{thm}{Theorem}
\newtheorem{lm}[thm]{Lemma}
\newtheorem{pr}[thm]{Proposition}
\theoremstyle{definition}
\newtheorem{rem}[thm]{Remark}
\newcommand*{\wh}{\widehat}
\newcommand{\CC}{\mathbb{C}}
\newcommand{\R}{\mathbb{R}}
\newcommand{\Z}{\mathbb{Z}}
\newcommand{\N}{\mathbb{N}}
\renewcommand{\le}{\leqslant}
\renewcommand{\ge}{\geqslant}
\let \al         =\alpha
\let \de         =\delta
\let \te         =\theta        
\let \phi         =\varphi
\title{An analytic criterion for the local finiteness\\ of a countable semigroup}
\author{O.\,Yu.~Aristov}
\email{aristovoyu@inbox.ru}
\keywords{locally finite semigroup, Arens-Michael envelope, nuclear $(DF)$-space.}
\subjclass[2020]{Primary 46A04, 46H99, 20M25,  Secondary  46A11}
\begin{document}

\begin{abstract}
We prove that a countable semigroup $S$ is locally finite if and only if the Arens-Michael envelope of its semigroup algebra is a $(DF)$-space. This is a counterpart to a recent result of the author, which asserts that $S$  is finitely generated if and only if the Arens-Michael envelope is a Fr\'echet space.
\end{abstract}

\maketitle

\markright{Criterion for the local finiteness}

It was discovered in \cite{AHHFG} that topological properties of the Arens-Michael envelope of the semigroup algebra of a countable semigroup~$S$ (denoted by $\wh{\CC S}$) depends on properties of the semigroup and vice versa. Namely, $\wh{\CC S}$ is a Fr\'echet space if and only if  $S$ is finitely generated. In this note we show that $\wh{\CC S}$  is a $(DF)$-space if and only if $S$ is locally finite. We also give a direct proof of an auxiliary result, which asserts that $\wh{\CC S}$ is a nuclear space for every countable~$S$.

Recall that an \emph{Arens-Michael algebra} is a complete Hausdorff locally convex algebra whose topology can be determined by a family of submultiplicative prenorms (i.e., satisfying $\|ab\|\le\|a\|\,\|b\|$ for every elements~$a$ and $b$).
The \emph{Arens-Michael envelope} of a (not necessarily unital) associative algebra~$A$ over $\CC$ is  the completion of~$A$ with respect to the topology determined by all possible submultiplicative prenorms. (This is a simplified version of a definition in~\cite{He93} in which locally convex algebras are considered.) For a countable semigroup  $S$ we denote by $\CC S$  its semigroup algebra over~$\CC$. (Note that  $\CC S$ is unital if and only if~$S$ has an identity.) We need the following  explicit description of the Arens-Michael envelope of $\CC S$ given in \cite{AHHFG}.

We say that an $\R$-valued function $\ell$ on~$S$ is a \emph{length function} if
$$
\ell(s_1s_2)\le \ell(s_1)+\ell(s_2)\qquad (s_1,s_2 \in  S).
$$
Suppose  $\{s_n\!:n\in\N\}$ is a generating subset  (e.g., one can take $S$ itself). For any function $F\!:\N\to \R_+$  put
\begin{equation}\label{locwordlen}
\ell_F(s)\!: = \inf\left\{\sum_{k=1}^m F(n_k)\!: \,
s=s_{n_1}\cdots s_{n_m}\!:\, n_1,\ldots, n_m \in
\N\right\}\,.
\end{equation}
It is easy to see that $\ell_F $ is a non-negative length function.
Note that any element~$a$ of $\CC S$ can be written as a finite sum $\sum_{s\in S}a_s\de_s$, where $\de_s$ is the delta-function at $s$. It is not hard to show that
\begin{equation}\label{prnFdef}
\|a\|_F\!:=\sum_{s\in S}|a_s|\,e^{\ell_F(s)}.
\end{equation}
defines a submultiplicative prenorm on $\CC S$. (Of course, one may replace  here Euler's constant by any strictly positive number.)

\begin{pr}\label{AMCS}\emph{(see \cite[Lemma~3.13]{AHHFG})}
Let~$S$ be a countable semigroup with generating subset $\{s_n\!:n\in\N\}$.
Then the locally convex space
$$
\wh{\CC S}=
\Bigl\{a=\sum_{s\in S} a_s\de_s\! :
\|a\|_F <\infty \quad\forall F\in\R_+^\N\Bigr\}
$$
endowed with the convolution and the topology determined by the family of submultiplicative prenorms $\{\|\cdot\|_F\!:\,F\in\R_+^\N\}$ defined in~\eqref{prnFdef}
is the Arens-Michael envelope of the semigroup algebra~$\CC S$.
\end{pr}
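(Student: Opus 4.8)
The plan is to identify, inside the space $\Fun(S)$ of all complex functions on $S$, the completion of $\CC S$ for the \emph{Arens--Michael topology} $\tau$ generated by all submultiplicative prenorms. Three facts are needed: \textbf{(a)} the family $\{\|\cdot\|_F:F\in\R_+^\N\}$ generates $\tau$ on $\CC S$; \textbf{(b)} $\CC S$ is dense in $\wh{\CC S}$ for this topology; \textbf{(c)} $\wh{\CC S}$ is complete. Granting these, $\wh{\CC S}$ is by definition the Arens--Michael envelope of $\CC S$. The convolution then extends to it because each $\|\cdot\|_F$ is submultiplicative, so convolution is jointly continuous on $\CC S$; it is well defined on $\wh{\CC S}$ since $\|a*b\|_F\le\|a\|_F\|b\|_F$ forces $a*b\in\wh{\CC S}$, and the sums defining the coefficients of $a*b$ converge absolutely because $e^{\ell_F}\ge 1$ gives $\wh{\CC S}\subseteq\ell^1(S)$. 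Note also that each $\|\cdot\|_F$ is in fact a norm, the weights $e^{\ell_F(s)}$ being strictly positive, so $\tau$ is Hausdorff and the completion makes sense.

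For \textbf{(a)}, as noted above each $\|\cdot\|_F$ is a submultiplicative prenorm, so it suffices to dominate an arbitrary submultiplicative prenorm $p$ on $\CC S$ by some $\|\cdot\|_F$. Put $F(n):=\max\{0,\log p(\delta_{s_n})\}$, so that $p(\delta_{s_n})\le e^{F(n)}$ for all $n$. For $s\in S$ and any factorization $s=s_{n_1}\cdots s_{n_m}$ into generators, submultiplicativity gives $p(\delta_s)\le\prod_{k=1}^m p(\delta_{s_{n_k}})\le e^{\sum_k F(n_k)}$; taking the infimum over factorizations and using that $\exp$ is continuous and increasing yields $p(\delta_s)\le e^{\ell_F(s)}$. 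Hence, for $a=\sum_s a_s\delta_s\in\CC S$, the triangle inequality gives $p(a)\le\sum_s|a_s|\,p(\delta_s)\le\|a\|_F$. Finally, $\max\{F_1,F_2\}$ again lies in $\R_+^\N$, and $F\ge G$ pointwise implies $\ell_F\ge\ell_G$, hence $\|\cdot\|_F\ge\|\cdot\|_G$; so $\{\|\cdot\|_F\}$ is a directed family, which together with the domination just proved shows it generates exactly $\tau$.

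For \textbf{(b)}, given finitely many weights $F_1,\dots,F_k$, directedness reduces the task to approximating in the single prenorm $\|\cdot\|_F$ with $F:=\max_i F_i$. Enumerating $S=\{t_1,t_2,\dots\}$, the partial sums $\sum_{j\le N}a_{t_j}\delta_{t_j}\in\CC S$ satisfy $\|a-\sum_{j\le N}a_{t_j}\delta_{t_j}\|_F=\sum_{j>N}|a_{t_j}|e^{\ell_F(t_j)}\to 0$ since $\|a\|_F<\infty$. For \textbf{(c)}, for each $F$ the weighted space $\ell^1(S,e^{\ell_F}):=\{a\in\Fun(S):\|a\|_F<\infty\}$ is a Banach space, and $\wh{\CC S}=\bigcap_F\ell^1(S,e^{\ell_F})$ carries precisely the projective-limit topology relative to the inclusions $\ell^1(S,e^{\ell_F})\hookrightarrow\ell^1(S,e^{\ell_G})$ for $G\le F$. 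A projective limit of complete spaces is complete; concretely, a $\tau$-Cauchy net in $\wh{\CC S}$ is Cauchy, hence convergent, in each Banach space $\ell^1(S,e^{\ell_F})$, and these limits agree because each coefficient functional $a\mapsto a_s$ is $\|\cdot\|_F$-continuous, so the common limit lies in $\wh{\CC S}$ and the net converges to it there.

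I expect the genuinely load-bearing point to be the combination of \textbf{(b)} and \textbf{(c)}: since $\R_+^\N$ is uncountable, $\wh{\CC S}$ is in general non-metrizable, so sequences do not suffice, and one must make sure that the concretely defined space really is the abstract completion rather than merely a complete space containing $\CC S$ densely — this is exactly what the projective-limit description, together with continuity of the coefficient functionals, secures. By contrast, the conceptual heart of the argument — the passage from an abstract prenorm $p$ to a weight $F$ via $F(n)=\max\{0,\log p(\delta_{s_n})\}$ in step \textbf{(a)} — is short once one thinks of taking logarithms of the norms of the generators.
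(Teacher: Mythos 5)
Your proposal is correct and is essentially the standard argument: the key step of dominating an arbitrary submultiplicative prenorm $p$ by $\|\cdot\|_F$ with $F(n)=\max\{0,\log p(\delta_{s_n})\}$, combined with density of $\CC S$ and completeness of the weighted-$\ell^1$ projective limit via the coefficient functionals, is exactly what is needed, and your treatment of the extension of convolution is sound. Note that the paper itself gives no proof of this proposition, citing \cite[Lemma~3.13]{AHHFG} instead, and your argument matches the approach taken there.
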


It is worth mentioning  that, by  the universal property of the Arens-Michael envelope \cite[Chapter~5, Definition 2.21]{He93}, the resulting topology does not depend on enumeration of the generating subset.
Note also that we can take $F$ with values in $\N$ but, nevertheless, the corresponding defining system of submultiplicative prenorms may still be uncountable.

To prove the main result, Theorem~\ref{lfDFeq}, we need the following theorem on nuclearity.

\begin{thm}\label{whCSnu}
Let $S$ be a countable semigroup. Then the locally convex space $\wh{\CC S}$ is nuclear.
\end{thm}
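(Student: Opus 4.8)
The plan is to realise $\wh{\CC S}$ as a reduced projective limit of weighted $\ell^1$-spaces and to invoke the standard local criterion for nuclearity: a complete locally convex space is nuclear as soon as, for every continuous seminorm $p$, there is a continuous seminorm $q\ge p$ for which the canonical linking map between the associated local Banach spaces is a nuclear operator. Throughout I would take the generating subset in Proposition~\ref{AMCS} to be $S$ itself, enumerated as $\{s_n:n\in\N\}$; the argument below is uniform (in particular, if $S$ is finite then $\wh{\CC S}$ is finite-dimensional and the claim is trivial, and all sums that appear are finite anyway).

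First the bookkeeping. By Proposition~\ref{AMCS} the topology of $\wh{\CC S}$ is determined by $\{\|\cdot\|_F:F\in\R_+^\N\}$, and this family is directed: if $F\le G$ pointwise then every factorisation gives $\sum_k F(n_k)\le\sum_k G(n_k)$, so $\ell_F\le\ell_G$ (see \eqref{locwordlen}) and hence $\|\cdot\|_F\le\|\cdot\|_G$, while $\|\cdot\|_{\max(F,G)}$ dominates both. Each $\|\cdot\|_F$ is a norm on $\CC S$ whose completion is the weighted space $X_F:=\{\,a=\sum_{s}a_s\de_s : \|a\|_F<\infty\,\}$, a Banach space isometric to $\ell^1(S)$ via $(a_s)\mapsto(a_s e^{\ell_F(s)})$ and containing $\CC S$ densely. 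Consequently $\wh{\CC S}=\varprojlim_F X_F$ as a reduced projective limit of Banach spaces, and it suffices to find, for each $F$, some $G\ge F$ for which the linking map $\iota_{GF}\colon X_G\to X_F$, $a\mapsto a$, is nuclear. Under the isometries above $\iota_{GF}$ becomes the diagonal operator on $\ell^1(S)$ with entries $\bigl(e^{\ell_F(s)-\ell_G(s)}\bigr)_{s\in S}$; writing it as $\sum_{s}e^{\ell_F(s)-\ell_G(s)}\,e_s^{*}\otimes e_s$ (with $\|e_s^{*}\|=\|e_s\|=1$) shows that it is nuclear whenever
\[
\sum_{s\in S}e^{\ell_F(s)-\ell_G(s)}<\infty .
\]

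The crux is to produce such a $G$. I would set $G:=F+c$ for a function $c\colon\N\to\R_+$ to be fixed at the end, and, for each $s\in S$, pick a factorisation $s=s_{p_1}\cdots s_{p_l}$ that is nearly $\ell_G$-optimal, say $\sum_{k=1}^{l}G(p_k)\le\ell_G(s)+1$ (possible since the infimum in \eqref{locwordlen} is over a nonempty set and is non-negative). The same factorisation competes in the infimum defining $\ell_F(s)$, so $\ell_F(s)\le\sum_k F(p_k)$ and therefore
\[
\ell_F(s)-\ell_G(s)\ \le\ \sum_k F(p_k)-\sum_k G(p_k)+1\ =\ 1-\sum_{k}c(p_k),
\]
whence $e^{\ell_F(s)-\ell_G(s)}\le e\prod_{k}e^{-c(p_k)}$. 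The decisive point is that the map $s\mapsto(p_1,\dots,p_l)$ is injective into the set of all finite tuples of positive integers — because $s$ is recovered as the product $s_{p_1}\cdots s_{p_l}$ — so, since all summands are non-negative, summing over $S$ is dominated by summing over the free semigroup on the generators:
\[
\sum_{s\in S}e^{\ell_F(s)-\ell_G(s)}\ \le\ e\sum_{l\ge1}\ \sum_{p_1,\dots,p_l\in\N}\ \prod_{k=1}^{l}e^{-c(p_k)}\ =\ e\sum_{l\ge1}q^{\,l},\qquad q:=\sum_{p\in\N}e^{-c(p)} .
\]
It now suffices to choose $c$ with $q<1$, for instance $c(p)=3\ln(p+1)$, which gives $q=\zeta(3)-1<1$; then the series converges, $\iota_{GF}$ is nuclear, and the theorem follows.

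The step I expect to require the most care is the reduction: pinning down the correct local Banach spaces $X_F$, verifying that the (directed, but genuinely uncountable) family $\{\|\cdot\|_F\}$ still allows the local nuclearity criterion to be applied coefficientwise, and identifying the linking maps with diagonal operators. The analytic estimate afterwards is short, and its one substantive idea is that fixing a system of factorisations realises $S$ as a subset of the free semigroup on $\{s_n\}$, which replaces the uncontrolled combinatorics of $S$ by a plain geometric series that is summable as soon as the weights $c(p)$ grow at the harmless logarithmic rate above.
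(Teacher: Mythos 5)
Your proposal is correct and takes essentially the same route as the paper: realize $\wh{\CC S}$ as a projective limit of weighted $\ell^1$-spaces, observe the linking maps are diagonal and nuclear once $\sum_{s}e^{\ell_F(s)-\ell_G(s)}<\infty$, and obtain this by fixing an injective assignment of (near-)optimal factorizations and perturbing $F$ so the sum over the free semigroup on the generators converges. The paper's version uses the increment $c(n)=n$ with integer-valued $F$ (so optimal factorizations exist) and counts compositions to get the bound $2^n e^{-n}$, whereas your near-optimal factorizations and logarithmic increment giving $q=\zeta(3)-1<1$ are only minor technical variants of the same argument.
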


An idea of a proof of this theorem was communicated by the author to Akbarov and the corresponding argument (in the group case) can be found in \cite[Lemma 3.15]{Ak20+}.  But in some points it differs from the proof given here.
In particular, Akbarov's version is based on the Grothendieck--Pietsch nuclearity criterion and uses a terminology  different from ours.   The direct and relatively short proof that is presented here includes as well a concise version of a combinatorial trick essential in the argument. Note  that Theorem~\ref{whCSnu} can also be deduced from \cite[Theorem~6.3]{AHHFG}.

For any $F\in\R_+^\N$ we denote by $\CC S_F$ the completion of $\CC S$ with respect to~$\|\cdot\|_F$.
It is obvious that for $F_1\le F_2$ there is a naturally defined continuous homomorphism $\te_{F_2F_1}\!:\CC S_{F_2}\to \CC S_{F_1}$ of Banach algebras.
\begin{pr}\label{F1F1nuc}
For every $F_1\in \Z_+^{\N}$ there is $F_2\in \N^{\N}$ such that $F_2>F_1$ and  $\te_{F_2F_1}\!:\CC S_{F_2}\to \CC S_{F_1}$ is a nuclear linear map.
\end{pr}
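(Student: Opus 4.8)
The plan is to realise each $\CC S_F$ as a weighted $\ell^1$-space and to exhibit $\te_{F_2F_1}$ as a nuclear ``diagonal'' operator. Since $\|\de_s\|_F=e^{\ell_F(s)}>0$, the submultiplicative prenorm $\|\cdot\|_F$ is actually a norm on $\CC S$, one has $\CC S_F=\{a=\sum_s a_s\de_s:\sum_s|a_s|e^{\ell_F(s)}<\infty\}$, and the coordinate functional $\phi^F_s\colon a\mapsto a_s$ lies in $(\CC S_F)'$ with $\|\phi^F_s\|=e^{-\ell_F(s)}$. As $\te_{F_2F_1}$ fixes every $\de_s$, on the dense subspace $\CC S\subset\CC S_{F_2}$ we have $\te_{F_2F_1}=\sum_{s\in S}\phi^{F_2}_s(\,\cdot\,)\,\de_s$, where the $\de_s$ on the right are read in $\CC S_{F_1}$, with $\|\de_s\|_{F_1}=e^{\ell_{F_1}(s)}$. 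Because $\sum_{s\in S}\|\phi^{F_2}_s\|\,\|\de_s\|_{F_1}=\sum_{s\in S}e^{\ell_{F_1}(s)-\ell_{F_2}(s)}$, once this last series converges the right-hand side is an absolutely convergent series of rank-one operators, hence a nuclear operator $\CC S_{F_2}\to\CC S_{F_1}$; it agrees with $\te_{F_2F_1}$ on $\CC S$ and therefore equals it. So the whole proposition reduces to producing $F_2\in\N^\N$ with $F_2>F_1$ and
$$
\sum_{s\in S}e^{\ell_{F_1}(s)-\ell_{F_2}(s)}<\infty .
$$

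Next I would look for $F_2$ of the form $F_2=F_1+g$ with $g\colon\N\to\N$, $g\ge1$; then $F_2\in\N^\N$ (because $F_1\ge0$) and $F_2>F_1$ come for free, and $\ell_g$ is formed by the same recipe as $\ell_F$. The mechanism is the elementary bound $\ell_{F_1+g}(s)\ge\ell_{F_1}(s)+\ell_g(s)$ valid for every $s\in S$: for any factorisation $s=s_{n_1}\cdots s_{n_m}$ one has $\sum_k(F_1(n_k)+g(n_k))=\sum_kF_1(n_k)+\sum_kg(n_k)\ge\ell_{F_1}(s)+\ell_g(s)$, since each of the two sums is at least the corresponding infimum, and one then takes the infimum over all factorisations of $s$. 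Consequently $e^{\ell_{F_1}(s)-\ell_{F_2}(s)}\le e^{-\ell_g(s)}$, so it is enough to choose $g$ with $\sum_{s\in S}e^{-\ell_g(s)}<\infty$.

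This is the combinatorial core. As $\{s_n\}$ generates $S$, each $s\in S$ admits at least one factorisation $s=s_{n_1}\cdots s_{n_m}$, whence $e^{-\ell_g(s)}=\sup\bigl\{e^{-(g(n_1)+\cdots+g(n_m))}:s=s_{n_1}\cdots s_{n_m}\bigr\}$ is bounded above by the corresponding sum over all such factorisations. Summing over $s\in S$, and using that every finite tuple $(n_1,\dots,n_m)\in\N^m$ represents exactly one element of $S$ (distinct tuples may represent the same one, which only makes the estimate safer), we obtain
$$
\sum_{s\in S}e^{-\ell_g(s)}\ \le\ \sum_{m=1}^{\infty}\ \sum_{(n_1,\dots,n_m)\in\N^m}e^{-(g(n_1)+\cdots+g(n_m))}\ =\ \sum_{m=1}^{\infty}\Bigl(\sum_{n\in\N}e^{-g(n)}\Bigr)^{\!m}.
$$
It now suffices to arrange $\si:=\sum_{n\in\N}e^{-g(n)}<1$; for instance $g(n)=n+1$ works, since the values $g(n)$ are distinct positive integers, so $\si\le\sum_{k=1}^{\infty}e^{-k}=(e-1)^{-1}<1$, and then the right-hand side above equals $\si/(1-\si)<\infty$. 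Taking this $g$ and $F_2=F_1+g$ completes the proof.

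The nuclearity reduction of the first paragraph and the inequality for $\ell_{F_1+g}$ are routine; the one genuinely substantive step is the middle display, where the intractable sum over the semigroup $S$ is majorised by the sum over all words in the free semigroup on $\{s_n\}$ and thereby collapses to a convergent geometric series. That is the ``combinatorial trick'' alluded to earlier, and it is the only place where any care is needed.
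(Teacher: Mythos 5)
Your proof is correct, and its first part (realising $\te_{F_2F_1}$ as a diagonal operator $\sum_{s}\chi_s(\cdot)\,\de_s$ built from the coordinate functionals and reducing everything to $\sum_{s\in S}e^{\ell_{F_1}(s)-\ell_{F_2}(s)}<\infty$) is exactly the paper's argument. Where you genuinely diverge is in the combinatorial core. The paper takes $F_2(n)=F_1(n)+n$ and proves the summability by a counting argument: since $F_2$ is integer-valued the infimum defining $\ell_{F_2}(s)$ is attained, which yields an injection from the level set $C_n=\{s:\ell_{F_2}(s)-\ell_{F_1}(s)=n\}$ into the set of compositions of integers $\le n$, hence $|C_n|\le 2^n-1$ and $\sum_n|C_n|e^{-n}<\infty$. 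You instead take $F_2=F_1+g$, use the superadditivity $\ell_{F_1+g}\ge\ell_{F_1}+\ell_g$ to reduce to $\sum_s e^{-\ell_g(s)}<\infty$, and then majorise the sum over $S$ by the sum over \emph{all} words in the free semigroup on the generators, which collapses to the geometric series $\sum_m\si^m$ with $\si=\sum_n e^{-g(n)}$. Your route avoids the attainment-of-the-infimum step entirely (so integrality of $F_2$ is needed only to meet the statement's requirement $F_2\in\N^\N$, not for the estimate) and works for any weight $g$ with $\si<1$, at the mild cost of needing that strict inequality, whereas the paper's count works with $g(n)=n$ outright and pins down the sharper cardinality bound $|C_n|\le 2^n-1$. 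Both proofs use the countably infinite indexing of the generating set in the same essential way, and both are complete.
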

Note that it is important in the proof that the generating set is infinite. For a finitely generated semigroup we must take a generating set with repetition.

We need an auxiliary lemma.
\begin{lm}\label{F1F2}
Let $F_1\in \Z_+^{\N}$. Put $F_2(n)\!:=F_1(n)+n$, where $n\in\N$. Then
\begin{equation}\label{SmSmmFF}
\sum_{s\in S}e^{\ell_{F_1}(s)-\ell_{F_2}(s)}<\infty.
\end{equation}
\end{lm}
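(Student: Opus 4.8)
The key point is to get a clean upper bound on the "multiplicity" with which a given value can be achieved by $\ell_{F_1}$, and to estimate $\ell_{F_2}$ from below in terms of the word length. Fix an element $s\in S$ and a representation $s=s_{n_1}\cdots s_{n_m}$ attaining (up to $\varepsilon$) the infimum defining $\ell_{F_1}(s)$. Since $F_2(n)=F_1(n)+n$, any such representation gives $\sum_k F_2(n_k)=\sum_k F_1(n_k)+\sum_k n_k$, so
\[
\ell_{F_2}(s)\le \ell_{F_1}(s)+\sum_{k=1}^m n_k
\]
for that particular representation; more importantly, taking the infimum the other way, $\ell_{F_2}(s)-\ell_{F_1}(s)$ is controlled once we can say that \emph{some} near-optimal $F_1$-representation uses only generators with small indices. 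This is exactly the combinatorial trick the text alludes to: for each integer value $t\ge 0$, I want to bound the number of $s\in S$ with $\ell_{F_1}(s)<t$ by a quantity like $2^{C(t)}$, where $C(t)$ depends only on $t$ and $F_1$ (not on $S$), using that the generating set is infinite so that each new generator index $n$ contributes at least... no — $F_1\in\Z_+^\N$ can be zero. So I must instead use the $+n$ term directly.

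**Main estimate.** Rewrite the sum in \eqref{SmSmmFF} by grouping: for $s\in S$ choose a representation $s=s_{n_1}\cdots s_{n_m}$ with $\sum_k F_1(n_k)\le \ell_{F_1}(s)+1$ (say), and note $\ell_{F_2}(s)\ge 0$ trivially is not enough — rather I will bound $e^{\ell_{F_1}(s)-\ell_{F_2}(s)}\le e^{\ell_{F_1}(s)}e^{-\ell_{F_2}(s)}$ and then for each $s$ pick a representation realizing $\ell_{F_2}$ nearly, giving $\ell_{F_1}(s)-\ell_{F_2}(s)\le \ell_{F_1}(s)-\sum_k F_1(n_k)-\sum_k n_k+1\le 1-\sum_k n_k$, where the last step uses $\ell_{F_1}(s)\le\sum_k F_1(n_k)$. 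Hence $e^{\ell_{F_1}(s)-\ell_{F_2}(s)}\le e\cdot e^{-\sum_k n_k}$ where $(n_1,\dots,n_m)$ is the chosen near-optimal $F_2$-tuple for $s$. Now sum over $s\in S$: since distinct $s$ get assigned (at least one) distinct... no, the map $s\mapsto(n_1,\dots,n_m)$ is injective because $s=s_{n_1}\cdots s_{n_m}$ recovers $s$. Therefore
\[
\sum_{s\in S}e^{\ell_{F_1}(s)-\ell_{F_2}(s)}\;\le\; e\sum_{m\ge 0}\ \sum_{n_1,\dots,n_m\ge 1} e^{-(n_1+\cdots+n_m)}\;=\;e\sum_{m\ge 0}\Bigl(\sum_{n\ge 1}e^{-n}\Bigr)^{m}=e\sum_{m\ge0}\Bigl(\tfrac{1}{e-1}\Bigr)^{m},
\]
and since $\frac{1}{e-1}<1$ this geometric series converges. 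That proves \eqref{SmSmmFF}.

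**Where the care is needed.** The subtle point is handling the infimum/$\varepsilon$ issue and the empty word: if $S$ has an identity realized by the empty product, the $m=0$ term contributes $1$, which is fine. I should be careful that the representation length $m$ and the tuple are genuinely a function of $s$ (choose one near-optimal $F_2$-representation per $s$ once and for all, or work with a sum over all representations and use that for each tuple there is at most one $s$). The only real inequality chain to verify is $\ell_{F_1}(s)-\ell_{F_2}(s)\le 1-\sum_k n_k$: from $\ell_{F_1}(s)\le\sum_kF_1(n_k)$ and, for the chosen tuple, $\sum_k F_2(n_k)\le\ell_{F_2}(s)+1$, we get $\ell_{F_1}(s)-\ell_{F_2}(s)\le \sum_kF_1(n_k)-\sum_kF_2(n_k)+1=-\sum_kn_k+1$, using $F_2(n)-F_1(n)=n$. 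I expect no genuine obstacle — the main thing is to phrase the "one tuple per $s$, tuples are distinct" bookkeeping cleanly so the reduction to a convergent geometric series is rigorous; that infiniteness of the generating set enters precisely in allowing $n$ to range over all of $\N$ with $e^{-n}$ summable, exactly the remark following Proposition~\ref{F1F1nuc}.
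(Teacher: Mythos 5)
Your argument is correct and is essentially the paper's proof: both rest on the injection $s\mapsto(n_1,\dots,n_m)$ given by a choice of (near-)optimal $F_2$-representation together with the estimate $\ell_{F_2}(s)-\ell_{F_1}(s)\ge \sum_k n_k$ (up to your additive slack of $1$), reducing the sum to one over finite tuples of positive integers. The only cosmetic difference is that the paper exploits integrality to take an exactly optimal tuple and counts compositions level by level ($|C_n|\le 2^n-1$, then $\sum_n 2^n e^{-n}$), whereas you allow an $\varepsilon=1$ slack and sum the resulting geometric series $\sum_m\bigl(\sum_{n\ge1}e^{-n}\bigr)^m$ directly --- the same computation organized differently.
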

\begin{proof}
Fix $n\in\N$ and put $C_n\!:=\{s\in S\!:\, \ell_{F_2}(s)-\ell_{F_1}(s)= n\}$. We first claim that
\begin{equation}\label{cardCn}
|C_n|\le 2^n-1,
\end{equation}
where $|C_n|$ denotes the cardinality of $C_n$.

Indeed, let $s\in S$. It follows from the definition of $\ell_{F_2}$ and the fact that the values of $\ell_{F_2}$ are in $\N$ that there are  $n_1,\ldots, n_m \in\N$ such that $s=s_{n_1}\cdots s_{n_m}$ and
$$
\ell_{F_2}(s)=\sum_{k=1}^{m}F_2(n_k).
$$
For every $s\in S$ fix  $n_1,\ldots, n_m$ satisfying this equality. Then we have the map $s\mapsto (n_1,\ldots, n_m)$, where~$m$ depends on~$s$. It is obvious that the map is injective.

Since
$$
\ell_{F_2}(s)=\sum_{k=1}^{m}(F_1(n_k)+n_k)\quad\text{and}\quad
\ell_{F_1}(s)\le \sum_{k=1}^{m} F_1(n_k),
$$
we have $\ell_{F_2}(s)-\ell_{F_1}(s)\ge \sum n_k$ for every~$s$.
In particular, $|C_n|$  does not exceed the number of finite sequences $(n_1,\ldots, n_m)$ in~$\N$ satisfying the inequality $n\ge \sum n_k$. It is well known that the number of compositions of~$j$  (ways of writing~$j$ as the sum $\sum n_k$, where $n_1,\ldots, n_m$ are arbitrary positive integers) equals~$2^{j-1}$. Summing over~$j$, we obtain~\eqref{cardCn}.

Further, $S=\bigcup_{n\in\N} C_n$ because $\ell_{F_2}(s)-\ell_{F_1}(s)>0$ for every~$s$. Since $C_n$ are disjoint, we have
$$
\sum_s e^{\ell_{F_1}(s)-\ell_{F_2}(s)}= \sum_{n=1}^\infty |C_n| e^{-n}.
$$
It follows from \eqref{cardCn} that
$$
\sum_s e^{\ell_{F_1}(s)-\ell_{F_2}(s)}\le \sum_{n=1}^\infty 2^n e^{-n}<\infty.
$$
\end{proof}

\begin{proof}[Proof of Proposition~\ref{F1F1nuc}]
Put $F_2(n)\!:=F_1(n)+n$ as in Lemma~\ref{F1F2}. Let $\chi_s$ denote the functional on $\CC S_{F_2}$ such that $\chi_s(\de_s)=1$ and $\chi_s(\de_t)=0$ when $t\ne s$. It is easy to see that $\|\de_s\|_{F_1}=e^{\ell_{F_1}(s)}$ and $\|\chi_s\|'_{F_2}=e^{-\ell_{F_2}(s)}$, where the prime means that we take the norm in the dual Banach space.
Writing an arbitrary element~$a$ of $\CC S_{F_2}$ as  $\sum_{s} a_s\de_s$ we have
$$
\te_{F_2F_1}(a)= \sum_{s\in S} \chi_s(a)\, \de_s.
$$
It follows from Lemma~\ref{F1F2} that
$\sum \|\chi_s\|'_{F_2} \,\|\de_s\|_{F_1}<\infty$
and so $\te_{F_2F_1}$ is nuclear.
\end{proof}

\begin{proof}[Proof of Theorem~\ref{whCSnu}]
Let  $\{s_n\!:n\in\N\}$ be an infinite generating subset of~$S$.
It follows from Proposition~\ref{AMCS} that $\wh{\CC S}$ is a projective limit of the system of Banach spaces of the form $\CC S_F$, where~$F$ runs over $\R_+^\N$. Moreover, we can assume that each~$F$ takes values in~$\Z_+$.
Then for every submultiplicative prenorm $\|\cdot\|$ on $\CC S$ there is $F_1\in\Z_+^\N$ such that  $\|\cdot\|\le C\|\cdot\|_{F_1}$ for some $C>0$. By Proposition~\ref{F1F1nuc}, for every~$F_1$ there is $F_2\ge F_1$ such that $\te_{F_2F_1}$ is a nuclear linear map, which immediately implies that $\wh{\CC S}$ is nuclear.
\end{proof}

Recall that a semigroup $S$ is said to be \emph{locally finite} if every finite subset of~$S$ is contained in a finite subsemigroup (see, e.g., \cite[p.~161]{Gri95} or, in the case of a group, \cite{KB73}).

Recall also that a locally convex space is called a \emph{$(DF)$-space }if it has a fundamental sequence of bounded subsets and every strongly bounded subset of its strong dual  that is a union of countably many equicontinuous sets is also equicontinuous \cite[\S\,29.3]{Kot1}. We do not use this definition directly but  refer only to the classical results that the strong dual of a Fr\'echet space is a $(DF)$-space and that the strong dual  of a $(DF)$-space is a Fr\'echet space \cite[pp.\,396--397]{Kot1}.

\begin{pr}\label{lofiDF}
Let $S$ be a locally finite countable semigroup. Then $\CC S$ endowed with the strongest locally convex topology is an Arens-Michael algebra. As a corollary, $\wh{\CC S}$ is topologically isomorphic to $\CC S$  and hence it is a $(DF)$-space.
\end{pr}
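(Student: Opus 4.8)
The plan is to verify directly that $\CC S$ equipped with the strongest locally convex topology $\tau$ is an Arens--Michael algebra; the corollary is then essentially formal. Indeed, the topology on $\CC S$ determined by \emph{all} submultiplicative prenorms is contained in $\tau$ (every such prenorm is $\tau$-continuous, since $\tau$ is the finest locally convex topology) and, once we know it equals $\tau$, completeness of $(\CC S,\tau)$ shows that $\wh{\CC S}$ — the completion of $\CC S$ for that topology — is $(\CC S,\tau)$ itself. A countable-dimensional space with its strongest locally convex topology is complete, being a strict inductive limit of finite-dimensional spaces; and it is topologically isomorphic to the strong dual of the Fr\'echet space $\CC^{S}\cong\CC^{\N}$ (a basic neighbourhood $\{f:\sum_s|f_s|r_s\le1\}$ of $0$ in that strong dual is exactly the absolutely convex hull of $\{r_s\de_s:s\in S\}$), hence it is a $(DF)$-space by the cited result of K\"othe.

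So the real content is that $\tau$ is determined by submultiplicative prenorms; completeness and Hausdorffness are clear, and convolution is jointly continuous because a bilinear map out of a locally convex direct sum of finite-dimensional spaces is automatically continuous. Since $\tau$ is generated by all seminorms, it suffices to dominate an arbitrary prenorm $p$ on $\CC S$ by a submultiplicative one. Writing $a=\sum_s a_s\de_s$ we have $p(a)\le\sum_s|a_s|\,v(s)$ with $v(s):=\max(p(\de_s),1)\ge1$, so it is enough to produce a submultiplicative weight $\omega\colon S\to[1,\infty)$ with $\omega\ge v$: then $\|a\|_\omega:=\sum_s|a_s|\,\omega(s)$ is a submultiplicative prenorm with $\|\cdot\|_\omega\ge p$ (submultiplicativity is checked as for \eqref{prnFdef}, starting from $\|\de_s*\de_t\|_\omega=\omega(st)\le\omega(s)\,\omega(t)$).

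This is where local finiteness enters. Using that $S$ is countable and locally finite, I would choose an increasing sequence $S_1\subseteq S_2\subseteq\cdots$ of \emph{finite subsemigroups} with $\bigcup_n S_n=S$ (enumerate $S$, and inductively let $S_n$ be a finite subsemigroup containing $S_{n-1}$ together with the $n$-th element), and set $N(s):=\min\{n:s\in S_n\}$. Because each $S_n$ is a subsemigroup, any product $s=t_1\cdots t_m$ lies in $S_{\max_k N(t_k)}$, so $N$ satisfies the ultrametric-type inequality $N(st)\le\max(N(s),N(t))$. Now pick any non-decreasing $g\colon\N\to[1,\infty)$ with $g(n)\ge\max\{v(s):s\in S_n\}$ (finite, as $S_n$ is finite) and put $\omega(s):=g(N(s))$. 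Then $\omega(s)\ge v(s)$ since $s\in S_{N(s)}$, and $\omega$ is submultiplicative:
$$
\omega(st)=g(N(st))\le g\bigl(\max(N(s),N(t))\bigr)=\max\bigl(g(N(s)),g(N(t))\bigr)\le\omega(s)\,\omega(t),
$$
using monotonicity of $g$ for the middle equality and $g\ge1$ for the last step. Hence $\tau$ is an Arens--Michael topology and $(\CC S,\tau)$ is an Arens--Michael algebra, which by the first paragraph gives $\wh{\CC S}\cong(\CC S,\tau)$ and that this space is a $(DF)$-space.

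I expect the construction of the dominating submultiplicative weight to be the only real obstacle. The key point is that a filtration of $S$ by finite subsemigroups — available precisely because $S$ is countable and locally finite — yields the inequality $N(st)\le\max(N(s),N(t))$, and this is exactly what makes $g\circ N$ submultiplicative for every non-decreasing $g\ge1$; for a non-locally-finite $S$ such as $(\N,+)$ every length function is bounded by a linear one, so no such domination is possible and $\CC S$ with $\tau$ fails to be Arens--Michael, which is consistent with the equivalence being proved in the paper.
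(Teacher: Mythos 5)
Your proof is correct, and it reaches the same two conclusions by the same endgame (completeness of the finest locally convex topology on a countable-dimensional space, and the identification of $\CC S$ with the strong dual of the Fr\'echet space $\CC^S$ to get the $(DF)$ property), but the central step is packaged differently from the paper. The paper works inside the explicit description of $\wh{\CC S}$ from Proposition~\ref{AMCS}: it fixes a generating sequence with $s_n\notin S_{n-1}$ (each $S_n$ finite by local finiteness), and for a given weight $\phi$ builds an increasing $F$ on indices so that $\phi(s)\le e^{\ell_F(s)}$, the key combinatorial point being that any factorization of $s\in S_n\setminus S_{n-1}$ into generators must use a generator of index at least $n$, whence $\ell_F(s)\ge F(n)$. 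You instead bypass the length-function machinery and Proposition~\ref{AMCS} entirely: you exhaust $S$ by arbitrary finite subsemigroups $S_1\subseteq S_2\subseteq\cdots$, set $N(s)=\min\{n: s\in S_n\}$, observe the inequality $N(st)\le\max(N(s),N(t))$, and note that $g\circ N$ is then a submultiplicative weight dominating any prescribed weight for suitable non-decreasing $g\ge 1$; this dominates every prenorm on $\CC S$ by a submultiplicative one, which is exactly what the definition of the Arens--Michael envelope requires. The two arguments encode the same finiteness phenomenon, but yours is more self-contained (it needs only the definition of the envelope, not the $\|\cdot\|_F$-description, and no subsequence normalization of the generators), while the paper's version stays within the $\ell_F$-framework that it uses elsewhere (e.g.\ in the final step of Theorem~\ref{lfDFeq}). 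Your additional explicit checks --- completeness of the strict inductive limit, automatic joint continuity of bilinear maps on the direct sum, and the identification of basic strong-dual neighbourhoods --- are correct and fill in points the paper leaves implicit.
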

\begin{proof}
The strongest locally convex topology on $\CC S$ is determined by prenorms of the form $|a|_\phi\!:=\sum_s |\al_s|\,\phi(s)$, where $\phi$ runs over all functions on~$S$ with values in $[1,+\infty)$ (here $a=\sum \al_s\de_s$). On the other hand,  topology on $\wh{\CC S}$ is determined by the family ($\|\cdot\|_F$), where
$F$ runs over all $\R_+$-valued functions on~$\N$ (with respect to a fixed
generating subset $\{s_n\!:n\in\N\}$); see~\eqref{prnFdef}. The former family clearly includes the latter.  To complete the proof we show that the topologies coincide on~$\CC S$. It suffices to show that for any $\phi\!: S\to \R_+$ there is~$F$ such that $\phi(s)\le e^{\ell_F(s)}$ for all $s\in S$.

If $S$ is finite, there is nothing to prove. Suppose that $S$ is  infinite and denote by $S_n$ the subsemigroup generated by $s_1,\ldots,s_n$. Passing to a subsequence, we can assume that $s_n\notin S_{n-1}$. Since $S$ is locally finite, each $S_n$ is finite. Hence there is a non-negative increasing function $F$ such that
\begin{equation}\label{Festl}
F(n)\ge\max\{\ln\phi(s)\!:\,s\in S_n\}\qquad\text{for every~$n$.}
\end{equation}

Fix $s\in S$ and take $n$ such that $s\in S_n\setminus S_{n-1}$. Then  for every decomposition $s=s_{n_1}\cdots s_{n_m}$ there is $n_j$ equal or greater than $n$ and so we obtain $F(n)\le\sum_k F(n_k)$ since $F\ge 0$. Therefore $F(n)\le \ell_F(s)$. It follows from \eqref{Festl} that $\phi(s)\le e^{F(n)}$ and so $\phi(s)\le e^{\ell_F(s)}$. This holds for all $s\in S$ and so the topologies coincide.

Finally, $\CC S$ is isomorphic to the strong dual space of~$\CC^S$, which is a~Fr\'echet space  because~$S$ is countable. Thus $\CC S$ is a $(DF)$-space \cite[p.\,396]{Kot1}.
\end{proof}

\begin{thm}\label{lfDFeq}
A countable semigroup $S$ is locally finite if and only if $\wh{\CC S}$ is a $(DF)$-space.
\end{thm}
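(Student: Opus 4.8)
The plan is to prove the two implications separately. The "only if" direction (local finiteness implies $(DF)$) is already handled: it is exactly the content of Proposition~\ref{lofiDF}, so I would simply invoke it. The bulk of the work is the converse: assuming $\wh{\CC S}$ is a $(DF)$-space, deduce that $S$ is locally finite.

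For the converse, I would argue by contraposition. Suppose $S$ is \emph{not} locally finite, so there is a finite subset whose generated subsemigroup $T$ is infinite. Being finitely generated, $T$ is itself a countable semigroup, and I would first try to reduce to $T$: the inclusion $T\hookrightarrow S$ induces a continuous algebra homomorphism $\wh{\CC T}\to\wh{\CC S}$, and (more usefully) a retraction $\wh{\CC S}\to\wh{\CC T}$ coming from the linear projection $\CC S\to\CC T$ killing all $\de_s$ with $s\notin T$ — one should check this projection is continuous for the relevant prenorms, which it is since $\ell_F$ restricted to $T$ is dominated by the length function on $T$ computed with the same generators. A complemented (in the topological-vector-space sense) subspace of a $(DF)$-space is again a $(DF)$-space, so if $\wh{\CC S}$ were $(DF)$ then $\wh{\CC T}$ would be too. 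Thus it suffices to show: a \emph{finitely generated} infinite semigroup $T$ does not have $\wh{\CC T}$ a $(DF)$-space.

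Now I bring in the companion result quoted in the introduction: since $T$ is finitely generated, $\wh{\CC T}$ is a Fr\'echet space (this is the theorem of \cite{AHHFG} cited at the start, that $\wh{\CC S}$ is Fr\'echet iff $S$ is finitely generated). A locally convex space that is simultaneously Fr\'echet and $(DF)$ is normable — this is a classical fact (a Fr\'echet space is $(DF)$ only if it is a Banach space; see \cite[\S\,29.3]{Kot1} or derive it from the fact that a $(DF)$-space has a fundamental \emph{sequence} of bounded sets, which for a Fr\'echet space forces a bounded neighbourhood of zero). So I would be reduced to showing that $\wh{\CC T}$ is \emph{not} normable when $T$ is infinite. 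This is where the structure of $\wh{\CC T}$ from Proposition~\ref{AMCS} does the work: the prenorms $\|\cdot\|_F$ for $F$ ranging over unbounded sequences cannot all be dominated by a single $\|\cdot\|_{F_0}$, because on the delta-functions $\de_{s_n}$ one has $\|\de_{s_n}\|_F = e^{\ell_F(s_n)}$ and, since the generators $s_n$ are genuinely distinct elements of the infinite semigroup $T$ (choosing the generating set with repetition if necessary so that infinitely many $s_n$ are pairwise distinct), one can make $\ell_F(s_n)/\ell_{F_0}(s_n)\to\infty$ by choosing $F$ growing fast along the relevant indices. Hence no single prenorm is dominating, $\wh{\CC T}$ carries no continuous norm inducing its topology, so it is not normable — contradiction. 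Therefore $T$ must be finite, i.e. $S$ is locally finite.

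The main obstacle I anticipate is the reduction step from $S$ to the finitely generated subsemigroup $T$: one must verify that the projection $\CC S\to\CC T$ extends to a \emph{continuous} linear retraction $\wh{\CC S}\to\wh{\CC T}$, which amounts to a careful comparison of the length functions $\ell_F$ on $S$ with those intrinsic to $T$ (a finite generating set of $T$ must be matched to a subset of the chosen countable generating family of $S$, and one must control what happens when an element of $T$ is written as a product using generators lying outside $T$ — here the freedom in Proposition~\ref{AMCS} to choose any generating set, in particular one that contains a finite generating set of $T$ as an initial segment, is the key device). Once that retraction is in hand, the Fr\'echet/$(DF)$/normability dichotomy and the non-normability computation are routine.
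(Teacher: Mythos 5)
Your necessity half is exactly the paper's (quote Proposition~\ref{lofiDF}), and your closing steps are sound: a Fr\'echet $(DF)$-space is normable, and for an infinite finitely generated $T$ the prenorms \eqref{prnFdef} are not all equivalent (this matches the paper's last lines). The genuine gap is the reduction step you yourself flagged: the continuous linear retraction $\wh{\CC S}\to\wh{\CC T}$ need not exist, and your justification has the estimate backwards. Writing $\ell^T_G$ for the length function of $T$ built from a finite generating set of $T$ with weight $G$, continuity of the coordinate projection (or of \emph{any} continuous retraction fixing $\CC T$) forces, for every $G$, some $F$ and $C$ with $\ell^T_G(t)\le \ell_F(t)+\ln C$ for all $t\in T$ (test on the elements $\de_t$). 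What is actually true, and what you cite, is the opposite inequality $\ell_F|_T\le \ell^T_G$ (restricting to decompositions inside $T$ can only raise the infimum); that gives continuity of the \emph{inclusion} $\wh{\CC T}\to\wh{\CC S}$, which is what the paper uses, not of the projection. The reverse domination is an undistortedness condition on $T$ in $S$ and fails in general: take $S$ to be the Baumslag--Solitar group $\langle a,b \mid aba^{-1}=b^2\rangle$ (a countable, non locally finite semigroup) and $T=\{b^n: n\ge 1\}$. Then $b^{2^n}=a^nba^{-n}$, so $\ell_F(b^{2^n})$ grows at most linearly in $n$ for every fixed $F$, while $\ell^T_G(b^{2^n})=2^nG(b)$; no choice of $F$ (in particular no inflation of $F$ on generators outside $T$, since the cheap decompositions use only $a^{\pm1}$ and $b$) can repair this, so no continuous retraction exists and the ``complemented subspace of a $(DF)$-space'' argument cannot start. (In this example $\wh{\CC S}$ is in fact a non-normable Fr\'echet space, so the theorem itself is untouched; but your argument claimed the retraction merely from the existence of an infinite finitely generated subsemigroup, which is false.)

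The paper's route around exactly this obstacle is different and is the real content of the proof: it keeps only the continuous injection $j\colon\wh{\CC S_0}\to\wh{\CC S}$, passes to strong duals, where the Hahn--Banach theorem makes $j'$ surjective, and then applies Pt\'ak's open mapping theorem. For that it needs $(\wh{\CC S})'$ to be Fr\'echet (this is where the $(DF)$ hypothesis enters) and $(\wh{\CC S_0})'$ to be barreled, which is obtained from nuclearity (Theorem~\ref{whCSnu}) plus the Fr\'echet property of $\wh{\CC S_0}$, hence Montel-ness; openness of $j'$ then makes $(\wh{\CC S_0})'$ Fr\'echet, and the metrizable-$(DF)$-implies-normable fact finishes as in your sketch. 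So your endgame coincides with the paper's, but the bridge from the $(DF)$ hypothesis on $\wh{\CC S}$ to a conclusion about $\wh{\CC S_0}$ is missing in your proposal; note also that nuclearity, which you never invoke, is indispensable in the paper's bridge.
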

\begin{proof}
The necessity follows from Proposition~\ref{lofiDF}.

To verify the sufficiency suppose that $\wh{\CC S}$ is a $(DF)$-space. We need to show that any finitely generated subsemigroup $S_0$ of $S$ is finite. Note that, by the universal property of the Arens-Michael envelope, the embedding $S_0\subset S$ induces a continuous homomorphism $j\!:\wh{\CC S_0}\to\wh{\CC S}$. It follows from the representation of elements of these algebras as series that $j$ is injective. The Hahn–Banach theorem implies that the corresponding linear map $j'\!:(\wh{\CC S})'\to(\wh{\CC S_0})'$ between their strong dual spaces is surjective. Moreover, we claim that $j'$ is open.

To prove the claim we use  Pt\'{a}k's open mapping theorem, which asserts that every surjective continuous  linear map from a Pt\'{a}k space to a barreled space is open (see a proof in \cite[\S\,34.2, p.\,27, (3)]{Kot2} and the definition of a Pt\'{a}k space on p.\,26 in [ibid.]). Since $\wh{\CC S}$ is a $(DF)$-space, the strong dual $(\wh{\CC S})'$ is a Fr\'echet space  \cite[p.\,397]{Kot1}.
It follows from  \cite[\S\,21.10, p.\,273, (5)]{Kot1} that being a Fr\'echet space it is a Pt\'{a}k space. On the other hand, since $S_0$ is finitely generated, $\wh{\CC S_0}$ a Fr\'echet space \cite[Proposition~3.15]{AHHFG}. Being also nuclear by Theorem~\ref{whCSnu}, it is a Montel space \cite[p.\,519, (50.12)]{Tre}.  The strong dual of a Montel space is a Montel space \cite[\S\,27.2, p.\,369]{Kot1}. In particular, $(\wh{\CC S_0})'$ is barreled. Thus we can  apply Pt\'{a}k's open mapping theorem and $j'$ is open.

Finally, since $j'$ is open and $(\wh{\CC S})'$ is a Fr\'echet space,  so is $(\wh{\CC S_0})'$. Note that every metrizable $(DF)$-space is normable \cite[Observation~8.3.6]{CB87}. Thus $\wh{\CC S_0}$ is a Banach space. Then all the norms of the form \eqref{prnFdef} on $\CC S_0$ are equivalent and so  $S_0$ is finite as desired.
\end{proof}

\begin{rem}
Note that the classes of locally finite and finitely generated semigroups are opposite in properties since their intersection contains only finite semigroups. This fact is parallel to the assertion that every
Fr\'echet $(DF)$-space is a Banach space (because every metrizable $(DF)$-space is normable). Indeed, if $S$ is a countable semigroup, $\wh{\CC S}$ is a Fr\'echet space if only if~$S$ is finitely generated \cite[Proposition~3.15]{AHHFG} and $\wh{\CC S}$ is a $(DF)$-space if only if $S$ is locally finite (Theorem~\ref{lfDFeq}). Finally, $\wh{\CC S}$ is a Banach space if only if $S$ is finite.
\end{rem}


\begin{thebibliography}{CCJJVVXX}


\bibitem[Ak20+]{Ak20+}
S.\,S.~Akbarov, \emph{Holomorphic duality for countable discrete groups}, 	arXiv:2009.03372, 2020.

\bibitem[Ar20+]{AHHFG}
O.\,Yu.~Aristov, \emph{Holomorphically finitely generated Hopf algebras and quantum Lie groups}, arXiv:2006.12175, 2020.

\bibitem[CB87]{CB87}
P.~P\'{e}rez Carreras, J.~Bonet, \emph{Barrelled locally convex spaces}, North-Holland, 1987.

\bibitem[Gri95]{Gri95}
P.\,A.~Grillet, \emph{Semigroups: an introduction to the structure theory}, Marcel Dekker, Inc., New York, 1995.

\bibitem[He93]{He93}
A.\,Ya.~Helemskii, \emph{Banach and polynormed algebras: general
theory, representations, homology}, Nauka, Moscow, 1989
(Russian); English transl.: Oxford University Press, 1993.

\bibitem[KB73]{KB73}
O.\,H.~Kegel, B.\,A.\,F.~Wehrfritz,  \emph{Locally finite groups}, North-Holland, Amsterdam, 1973.

\bibitem[Ko69]{Kot1}
G.~K\"{o}the, \emph{Topological vector spaces I},  Springer, New York, 1969.

\bibitem[Ko79]{Kot2}
G.~K\"{o}the, \emph{Topological vector spaces II}, Springer, New York, Heidelberg, Berlin,  1979.


\bibitem[Tr67]{Tre}
F.~Treves, \emph{Topological vector spaces, distributions and kernels}, Academic Press, New York, 1967.
\end{thebibliography}
\end{document}